\documentclass{article}
\usepackage{amsfonts,amsmath,amstext,amsbsy,euscript,amssymb,amsthm}
\usepackage{pstricks}
\usepackage{tikz}
 \font\amsy=msbm10
\def\IZ{\hbox{\amsy\char'132}}
\def\IR{\hbox{\amsy\char'122}}
\newtheorem{theorem}{Theorem}
\newtheorem{lemma}{Lemma}
\newtheorem{prop}{Proposition}
\newtheorem{remark}{Remark}
\newtheorem{note}{Observation}
\newtheorem{example}{Example}
\usepackage[all]{xy}

\newtheorem{definition}{Definition}
\usepackage{tikz}
\newcommand*\circled[1]{\tikz[baseline=(char.base)]{
            \node[shape=circle,draw,inner sep=1pt] (char) {#1};}}

\newcommand{\tokg}{\begin{pspicture}(12pt,9pt)\psframe[linewidth=1pt,framearc=.8,fillstyle=solid, fillcolor=myGreen](1,0.3)\end{pspicture}}
\newcommand{\tokr}{\begin{pspicture}(12pt,9pt)\psframe[linewidth=1pt,framearc=.7,fillstyle=solid, fillcolor=red](1,0.3)\end{pspicture}}

\newcommand{\es}{\begin{pspicture}(12pt,3pt)\psframe[linewidth=1 pt,framearc=.7,fillstyle=solid, fillcolor=brown](3.95,0.2)\end{pspicture}}



\newrgbcolor{purple}{0.7 0.2 0.7}
\newrgbcolor{orange}{1.0 0.5 0.0}
\newrgbcolor{myGreen}{0 1 0.0}

\begin{document}
\title{Chromatic Nim\\ { finds a game for your solution}}
\author{Michael J. Fisher\footnote{Dept. of Mathematics, West Chester University, email: mfisher@wcupa.edu}, Urban Larsson\footnote{Dept. of Mathematics and Statistics, Dalhousie University, supported by the Killam trust, email: urban031@gmail.com}}
\maketitle

\begin{abstract}
We play a variation of Nim on stacks of tokens. Take your favorite increasing sequence of positive integers
and color the tokens according to the following rule. 
Each token on a level that corresponds to a number in the sequence is colored red; if the level does not correspond to a number in the sequence, color it green.
Now play Nim on a arbitrary number of stacks with the extra rule: if all top tokens are green, then you can make \emph{any} move you like. 
On two stacks, we give explicit characterizations for winning the normal play version for some popular sequences, such as Beatty sequences and the evil numbers corresponding to the 0s in the famous Thue-Morse sequence. We also propose a more general solution which depends only on which of the colors `dominates' the sequence. Our construction resolves a problem posed by Fraenkel at the BIRS 2011 workshop in combinatorial games.
\end{abstract}

\section{Introduction}
At the workshop in combinatorial games in BIRS 2011, Aviezri Fraenkel posed the following intriguing problem: find nice (short/simple) rules for a 2-player combinatorial game for which the $\mathcal{P}$-positions are obtained from a pair of complementary Beatty sequences \cite{Be}. We begin by solving this problem, by defining a class of heap games, dubbed  Bi-Chromatic Nim, or just Chromatic Nim, and then later in Section~\ref{S:4}, we explain some background to the problem. In Section~\ref{S:3}, we solve a similar game on arithmetic progressions. In Section~\ref{S:5}, we discuss the general environment for  Chromatic Nim on two heaps. At last, in Section~\ref{S:6}, we study the famous evil numbers, also known as the indexes of the 0s in the Thue-Morse sequence. 

\section{Bi-chromatic Nim finds a game for your Complementary Beatty solution}\label{S:2}
Let $S$ denote a subset of the positive integers. We let the $i$th token in a stack be \emph{red} if $i\in S$, and otherwise the $i$th token is \emph{green}. We play a take-away game on $k\geqslant 0$ copies of such stacks of various finite sizes. Classical Nim rules are always allowed; any number of tokens can be removed from precisely one of the stacks. In addition, if no heap size belongs to $S$, then \emph{the position is green} and any move is legal; in particular it is now allowed to lower all stacks to 0. Another way to identify  a green position is to look at the stacks from above. If you see only green tokens, then the position is green. Two players alternate moving and a player who cannot move loses. Note that if $S$ is the set of positive integers, then the game is $k$-pile Nim (because all tokens are red). If $S$ is the empty set, then the game is 1-pile Nim, independently of $k$, because all tokens are green (for a reader who likes to compute so-called Grundy values of impartial games, in this special case it obviously means that the Grundy value is the total number of tokens). We call this game $S$-Chromatic Nim.

Let $\beta > 2$ be irrational and let $S=\{\lfloor \beta n\rfloor \}$, for $n$ running over the positive integers. Then the red tokens are determined by: the $i$th token is red if and only if there is an $n$ such that $i=\lfloor \beta n\rfloor $. We play on two stacks, and, since the sequences are regular (as opposed to random), determined by a number $\beta$, we call the game (2-stack) $\beta $-Chromatic Nim.

\begin{figure}[h]
\psset{xunit=1cm}
\psset{yunit=0.8cm}
\begin{center}
\begin{pspicture}(3.2, 2)
\multirput(-.47,-.3 )(2, 0){1}{\es}
\multirput(0,0)(0,0.3){4}{\tokg}
\multirput(2,0)(0,0.3){2}{\tokg}
\multiput(-0.2,0.08)(0,0.3){1}{\tokr}
\multiput(1.8,0.08)(0,0.3){1}{\tokr}
\end{pspicture}
 \caption{The 2-stack $\phi^2$-Chromatic Nim position $(4, 2)$, where $\phi = \frac{1+\sqrt{5}}{2}$.}\label{F:1}
\end{center}

\end{figure}
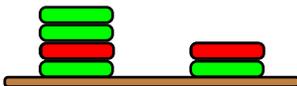
For an example, view Figure~\ref{F:1}. Since the position is not green, then only Nim type moves are possible. The unique winning move is to remove three tokens from the Left most heap. The easiest way to identify a winning move is to make sure precisely one heap is green and the other one is red. Now you also need to count the number of tokens in the respective stack, colored in the same color as the top token. If these two numbers are identical then you have found your winning move. This idea generalizes as we show in Section~\ref{S:5}

\begin{theorem}\label{two}
Let $\beta>2$ be irrational. Then a position $(x,y)$ of 2-stack $\beta$-Chromatic Nim is a previous player winning position if and only if $(x,y)=(\lfloor \alpha n \rfloor, \lfloor\beta n\rfloor)$ or $(\lfloor \beta n \rfloor, \lfloor\alpha n\rfloor)$, for some $n\in \mathbb{N}$, and where 
\begin{align}\label{Beatty}
\frac{1}{\alpha} + \frac{1}{\beta} = 1.
\end{align}
\end{theorem}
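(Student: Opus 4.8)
The plan is to apply the standard characterization of $\mathcal{P}$-positions for a finite normal-play game: a set $P$ equals the set of $\mathcal{P}$-positions if and only if (i) no legal move joins two positions of $P$, and (ii) from every position outside $P$ there is a legal move into $P$. Termination is automatic, since every legal move (a Nim move or the free ``green'' move) strictly decreases the total number of tokens, and the unique terminal position $(0,0)$ must lie in $P$. Writing $a_n=\lfloor\alpha n\rfloor$ and $b_n=\lfloor\beta n\rfloor$, I take $P=\{(a_n,b_n),(b_n,a_n):n\ge 0\}$ with $(a_0,b_0)=(0,0)$. The groundwork is to record three facts about Beatty sequences \cite{Be}: since $1/\alpha+1/\beta=1$ with $\beta>2$ we have $1<\alpha<2$; the sequences $(a_n)$ and $(b_n)$ are strictly increasing and partition the positive integers into the green levels $\{a_n\}$ and the red levels $\{b_n\}$; and $a_n<b_n$ for every $n\ge 1$. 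A heap of size $0$ counts as green, so a position is green precisely when both coordinates lie in $\{0\}\cup\{a_n\}$. In particular the only green position in $P$ is $(0,0)$, while every $(a_n,b_n)$ and $(b_n,a_n)$ with $n\ge 1$ has one red coordinate and is therefore subject only to Nim moves.

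For property (i), a position $(a_n,b_n)$ with $n\ge1$ is non-green, so I only have to rule out the two Nim moves. Lowering the green coordinate to $x'<a_n$ yields $(x',b_n)$, and the partition forces the only element of $P$ with red second coordinate $b_n$ to be $(a_n,b_n)$ itself; likewise lowering the red coordinate to $y'<b_n$ can only re-enter $P$ as $(a_n,b_n)$. Both are excluded because the move is strict, so no move keeps us in $P$.

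The substance is property (ii). First I would dispatch the green positions: any green $(x,y)\neq(0,0)$ permits an arbitrary move, so I simply move to $(0,0)\in P$. For a non-green $(x,y)$ only Nim moves are available, and I split on the colors of the two coordinates. If exactly one coordinate is red, say $(a_m,b_n)$ up to the symmetry of $P$ (allowing $a_0=0$), I compare indices: when $m>n$ I reduce the first coordinate from $a_m$ to $a_n$, landing in $(a_n,b_n)$; when $m<n$ I reduce the second coordinate from $b_n$ to $b_m$, landing in $(a_m,b_m)$; monotonicity guarantees each reduction is legal, and $m=n$ is exactly the excluded case $(x,y)\in P$. If both coordinates are red, say $(b_i,b_j)$, the two natural targets are $(a_j,b_j)$, reached by lowering the first coordinate to $a_j$, and $(b_i,a_i)$, reached by lowering the second to $a_i$; these require $a_j<b_i$ and $a_i<b_j$ respectively.

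The hard part will be showing that at least one of these two inequalities holds, so that a move into $P$ always exists from a two-red position. I would argue by contradiction: if $a_j\ge b_i$ and $a_i\ge b_j$, then, using $b_i>a_i$ and $b_j>a_j$, I obtain $a_j>b_i>a_i$ and $a_i>b_j>a_j$, which forces both $j>i$ and $i>j$, a contradiction. This pigeonhole step, together with the color-partition bookkeeping that keeps the candidate $\mathcal{P}$-positions straight, is the only genuinely delicate point; the remaining verifications are routine once the Beatty facts are in place. A quick sanity check against Figure~\ref{F:1} is reassuring: the position $(4,2)=(a_3,b_1)$ has $m=3>n=1$, and the prescribed move reduces the first (green) coordinate to $a_1=1$, exactly the ``remove three tokens from the Left'' move identified there.
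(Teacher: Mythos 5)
Your proof is correct and follows essentially the same route as the paper's: verify that no Nim move connects two candidate positions (using complementarity and strict monotonicity of the Beatty sequences), then exhibit, case by case on the colors of the two heaps, a legal move into the candidate set, with the green--green case dispatched by moving to $(0,0)$. The only cosmetic differences are that in the two-red case the paper simply assumes $m\geq n$ and lowers the larger heap $\lfloor\beta m\rfloor$ to $\lfloor\alpha n\rfloor$ using $\lfloor\alpha n\rfloor<\lfloor\beta n\rfloor\leq\lfloor\beta m\rfloor$ (so your contradiction argument, which you flag as the delicate point, is avoidable), and that the paper handles an empty smaller heap as a separate case rather than via the convention $a_0=b_0=0$.
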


\begin{proof}
Recall Beatty's theorem \cite{Be}: if (\ref{Beatty}) is satisfied, then the sequences $(\lfloor \alpha n\rfloor )$ and $(\lfloor \beta n\rfloor )$ are complementary, for $n>0$, each positive integer occurs in precisely one of the sequences and only once in this sequence. Suppose first that $x=\lfloor \alpha m \rfloor$ and $y = \lfloor\beta m\rfloor$, for some $m\in \mathbb{N}$. If $m=0$, we are done, so suppose that $m>0$. Observe that no position of this form is green, since one of the coordinates is $\lfloor \beta m\rfloor$, a red stack height. Hence it suffices to show that there is no Nim option of the same form. Note that (\ref{Beatty}) together with $\beta >2$ implies that $\alpha >1$. Hence decreasing just one of the heaps cannot give a position of the same form; it follows since, by Beatty's theorem, the sequences $(\lfloor \alpha n\rfloor )$ and $(\lfloor \beta n\rfloor )$ are complementary. 

Suppose next that the pair $(x, y)$ is not of the given form. If the smaller heap is empty then, the current player removes all tokens in the higher stack as well, which solves this case. Otherwise there are positive integers $m\geqslant n$ such that either\\

\noindent{\sc Case 1:} $x=\lfloor \alpha m \rfloor, y= \lfloor\beta n\rfloor$, $m>n$\\

\noindent{\sc Case 2:} $x=\lfloor \alpha m \rfloor, y=\lfloor\alpha n\rfloor$, $n>0$\\

\noindent{\sc Case 3:} $x=\lfloor \alpha n \rfloor, y=\lfloor\beta m\rfloor$, $m>n$\\

\noindent{\sc Case 4:} $x=\lfloor \beta m \rfloor, y=\lfloor\beta n\rfloor$, $n>0$.\\

Notice that none of the positions represents a position of the form in the theorem, the first and third since the sequences are strictly increasing and the second and fourth by complementarity. Hence, our task is to find a legal move to a position of the form in the theorem, for each case. 

The position $(x,y)$ given by the second case is green and so it is an $\mathcal{N}$-position. This follows from complementarity of the sequences $(\lfloor\alpha i\rfloor)$ and $(\lfloor\beta i\rfloor)$, namely since $x = \lfloor \alpha m \rfloor$ there is no integer $i$ such that $\lfloor \beta i\rfloor = x$ and similarly for $y$. For Case 1, it is clear that the current player can lower the $x$ stack to $x=\lfloor \alpha n \rfloor$.

For the third case, by $m>n$ since $\beta>2$ we get $\lfloor \beta m\rfloor>\lfloor \beta n\rfloor$, so that the desired Nim move on the $y$-stack is to lower it to the position $(\lfloor \alpha n \rfloor, \lfloor\beta n\rfloor)$. The fourth case is similar, but the lowering is on the $x$-stack, motivated by $\lfloor \beta m \rfloor\ge \lfloor\beta n\rfloor >\lfloor \alpha n\rfloor $, which follows since (\ref{Beatty}) gives $1<\alpha<2<\beta $ and by $n>0$. (The latter inequality excludes the terminal position $(x,y) = (0,0)$ which of course is also of the form $(x,y) = (\lfloor \alpha n \rfloor, \lfloor\beta n\rfloor)$, for some $n\in \mathbb{N}$).
\end{proof}


\section{Games with arithmetic progression solutions}\label{S:3}
Next, let us study a generalization of the game rules of $\beta$-Chromatic Nim to $\beta$ an integer, that is, let $\beta\geqslant 2$ be an integer and let $S = \{\beta n\mid n\in \IZ^+ \}$. We have the following perhaps not so surprising result in view of Theorem~\ref{two}. The solution will still consist of complementary sequences, and indeed we have now shifted focus around to the more standard one in CGT, finding a solution for your game. 

\begin{theorem}\label{one}
Let $\beta\geqslant 2$ be an integer. Then a position $(x,y)$ of 2-pile $\beta$-Chromatic Nim, with $x\leqslant y$, is a previous player winning position if and only if $(x,y)=(0,0)$ or 
\begin{align}\label{xy}
(x, y) &= ( \beta n+t, (\beta-1)(\beta n +t) + t)\\
&= (\beta n + t, \beta ((\beta -1)n+t)), \label{xy2}
\end{align}
for some $n\in \mathbb{N}$ and some $t\in\{1,\ldots ,\beta -1\}$. 
\end{theorem}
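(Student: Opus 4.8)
The plan is to verify the standard characterization of $\mathcal{P}$-positions. Writing
$$P=\{(0,0)\}\cup\{(\beta n+t,\ \beta((\beta-1)n+t)): n\geq 0,\ 1\leq t\leq \beta-1\}$$
and letting $P^{T}=\{(y,x):(x,y)\in P\}$, I will show that $P\cup P^{T}$ (1) contains no position from which a legal move reaches another element of $P\cup P^{T}$, and (2) is reachable in one legal move from every position outside it. Since $(0,0)$ is the unique terminal position and lies in $P$, these two facts identify $P\cup P^{T}$ as exactly the set of previous-player wins. Before this I will record that a position is \emph{green} precisely when neither coordinate is a positive multiple of $\beta$; in particular every nonzero element of $P\cup P^{T}$ has one coordinate a positive multiple of $\beta$ (a red top) and is therefore \emph{not} green, so only classical Nim moves are available from it.

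The engine of the argument is a structural lemma playing the role Beatty's theorem played in Theorem~\ref{two}. Using division with remainder I will check that $(n,t)\mapsto \beta n+t$ is a bijection from the index set onto the positive non-multiples of $\beta$, while $(n,t)\mapsto(\beta-1)n+t$ is a bijection onto $\mathbb{N}\setminus\{0\}$ (the blocks $[(\beta-1)n+1,(\beta-1)(n+1)]$ tile the positive integers), so $(n,t)\mapsto\beta((\beta-1)n+t)$ is a bijection onto the positive multiples of $\beta$. Hence the first coordinates of $P$ are exactly the positive non-multiples and the second coordinates exactly the positive multiples, each occurring once: this is the promised pair of complementary sequences. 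Moreover the common index $(\beta-1)n+t$ shows that the pairing $\pi(\beta n+t)=\beta((\beta-1)n+t)$ sends the $k$-th non-multiple to the $k$-th multiple, so $\pi$ is a strictly increasing bijection from non-multiples to multiples with $\pi(a)>a$. Given this, claim (1) is immediate: from a nonzero $(x,\pi(x))\in P$ only Nim moves are legal, lowering $x$ fixes the (multiple) second coordinate while each multiple has a unique partner, and lowering $\pi(x)$ fixes the (non-multiple) first coordinate while each non-multiple has a unique partner, so neither can land back in $P\cup P^{T}$; the case of $P^{T}$ is symmetric.

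For claim (2), take $(x,y)\notin P\cup P^{T}$ and split on greenness. If $(x,y)$ is green and nonzero, any move is legal and I move directly to $(0,0)\in P$. Otherwise at least one coordinate is a positive multiple and only Nim moves are allowed. If exactly one coordinate, say $y$, is a positive multiple while the other is $0$, I lower $y$ to $0$, reaching $(0,0)$. If $x$ is a positive non-multiple and $y$ a positive multiple with $y\neq\pi(x)$, I compare $y$ with $\pi(x)$: when $y>\pi(x)$ I lower $y$ to $\pi(x)$ to reach $(x,\pi(x))\in P$, and when $y<\pi(x)$ monotonicity gives $\pi^{-1}(y)<x$, so I lower $x$ to $\pi^{-1}(y)$ to reach $(\pi^{-1}(y),y)\in P$; the transposed arrangement is handled symmetrically. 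Finally, if both coordinates are positive multiples with, say, $x\leq y$, then $\pi^{-1}(x)<x\leq y$, so I lower $y$ to $\pi^{-1}(x)$, reaching $(x,\pi^{-1}(x))\in P^{T}$.

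I expect the main obstacle to be exactly this last verification in claim (2): showing that in each non-green configuration the ``partner'' target is reached by a genuine \emph{lowering} move rather than an illegal increase. The dichotomy $y\gtrless\pi(x)$ combined with the strict monotonicity of $\pi$ is what forces one of the two candidate moves to be legal, so the order-isomorphism nature of $\pi$ (not merely the set-theoretic complementarity of the two sequences) is the property that must be established with care and is the crux of the argument.
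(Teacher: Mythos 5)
Your proof is correct, and although it shares the paper's overall skeleton---complementarity of the two coordinate sets plus the standard two-property verification of a candidate $\mathcal{P}$-set---it runs on a genuinely different engine, and a more watertight one. The paper's proof of the ``from outside $\mathcal{P}'$ there is a move into $\mathcal{P}'$'' direction splits the remaining positions into four cases parametrized by indices $m>n$ and a \emph{single} residue $t$ appearing in both coordinates; read literally, this misses positions whose coordinates involve different residues (for $\beta=3$, the position $(4,6)$ fits none of Cases 1--4, yet the winning move $4\to 2$ reaches $(2,6)\in\mathcal{P}'$), and it never addresses positions with one empty stack. Your argument instead isolates the pairing $\pi(\beta n+t)=\beta((\beta-1)n+t)$ as an explicit order isomorphism from the positive non-multiples of $\beta$ onto the positive multiples, with $\pi(a)>a$, and then the comparison of $y$ against $\pi(x)$ (together with monotonicity of $\pi^{-1}$, which guarantees the target is reached by a genuine lowering) produces a legal move in \emph{every} residual configuration, mixed residues and zero coordinates included. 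What the paper's parametrization buys is brevity and a formulation parallel to its Beatty-sequence result (Theorem~\ref{two}), where the shared index $n$ really does control both coordinates; what your $\pi$ buys is a complete case analysis---the order-isomorphism property you flag as the crux is precisely the fact the paper's four-case decomposition uses implicitly but never establishes.
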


\begin{proof} 
Note first that, by definition of $t$, $(\beta -1)n+t$ takes on all the positive integers. Therefore the $y$-coordinates will take on precisely all multiples of $\beta$. For the same reason, the $x$-coordinates will take on precisely the complement of this set. Let $\mathcal{P}'$ denote all positions $(x,y)$ and $(y,x)$ where $x$ and $y$ are defined by  (\ref{xy}). We begin by showing that no option of $(x,y)\in \mathcal{P}'$ is in $\mathcal{P}'$. Since the sets of all $x$'s and $y$'s are complementary, it suffices to prove that only Nim type moves are possible. But, by definition, with notation as in (\ref{xy2}), each $x$ is green and each $y$ is red. 

Let us next prove that from each position $(x, y)\not\in \mathcal{P}'$, there is a move to a position in $\mathcal{P}'$. 

For this case, there are positive integers $m\geqslant n$ such that either\\

\noindent{\sc Case 1:} $x=\beta m+t, y= \beta ((\beta -1)n+t)$, $m>n$\\

\noindent{\sc Case 2:} $x=\beta m+t, y=\beta n+t$, $n>0$\\

\noindent{\sc Case 3:} $x=\beta n +t, y=\beta ((\beta -1)m+t)$, $m>n$\\

\noindent{\sc Case 4:} $x=\beta ((\beta -1)m+t), y=\beta ((\beta -1)n+t)$, $n>0$.\\

For case 1, we can reduce the $x$-stack to $\beta n+t$. For case 2, both stacks are green, so a move to $(0,0)$ is possible. For case 3, we can reduce $y$ to $\beta ((\beta -1)n+t)$. Finally, for case 4, since $\beta\geqslant 2$, $\beta ((\beta -1)m+t)\geqslant \beta ((\beta -1)n+t)\geqslant \beta (n+t)>\beta n+t$, so the desired move to $(\beta n + t, \beta ((\beta -1)n+t))$ is possible.
\end{proof}

\section{Discussion of the origin to the problematic solution}\label{S:4}

Typically, game rules of combinatorial games are short and easy to learn, but not always. One should be able to learn the rules of a game without a degree in mathematics. The distinction we are speaking of is Play-games versus Math-games. We contribute Play-game rules to an original Math-game problem. The new element is the coloring of the tokens. This takes care of uncountably many problems (disguised as one problem). We already know that there is a countably infinite family of Play-game rules for $\mathcal{P}$-positions of distinct complementary Beatty sequences \cite{F82} and that family has been expanded via continued fractions in \cite{DR, LW}. But Wythoff Nim \cite{W} is the origin of these type of questions. Nim on two heaps provides a nearly trivial mimicking winning strategy and the $\mathcal{P}$-positions are all positions with equal heap sizes. By adjoining these Nim $\mathcal{P}$-positions as moves in a new game, Wythoff discovered, that the new $\mathcal{P}$-positions will be described by half lines of slope the Golden ratio and its inverse.  In fact, the game in Figure~\ref{F:1} is $\mathcal{P}$-equivalent to Wythoff Nim. Now, the challenge of finding game rules for \emph{any} complementary pair of Beatty sequences was posed in \cite{DR} and resolved in \cite{LHF}. There was a proviso to the solution in \cite{DR}; the game rules must be \emph{invariant}, and this is a new notion to an old description of vector subtraction games from \cite{G}. Many game rules are non-invariant (perhaps the most famous of them all is Fibonacci Nim), but the classical ones (Subtraction games, Nim, Wythoff Nim) are invariant in the sense that a rule does not depend on which position it was  moved from (apart from the empty-heap condition). The $\star$-operator defined in \cite{LHF} produces invariant games, but only with exponential complexity in log of heap sizes. So the proviso game rules being invariant is nice in one way, but on the other hand, the obtained games cannot easily  be played by human beings. The exponential complexity is decreased to polynomial ditto in \cite{FL}, but where invariance was relaxed to 2-invariance, a special restricted family of variant games; but although the game rules are polynomial in succinct input size, they remain Math games. Here we study simple game rules: true Play-games. They have a somewhat surprising solution (although not as surprising as the solution Wythoff originally discovered in his variation of the game of Nim). 

\section{Other Chromatic Nim games}\label{S:5}

Since Chromatic Nim was capable enough to solve the problem posed at the BIRS 2011 workshop in combinatorial games, we got interested in what properties the game might have in a somewhat more general setting. Let us discuss a natural generalization of the games and sequences from Section~\ref{S:2}, still just on 2 stacks. Given a set $S$, let us call our game $S$-Chromatic Nim. The following lemma will allow us to resolve any 2-stack game for sequences with a surplus of green tokens, without further knowledge of the sequence. Let us regard the set $S$ as an increasing sequence of integers $S=\{s_i\}_{i>0}$ and let $\overline S=\{\overline s_i\}_{i>0}$ denote the unique increasing sequence \emph{complementary} to $S$ on the positive integers (that is if $n$ is a positive integer, then $n$ is in precisely one of the sequences). Then $S$ is \emph{green-dominated} if, for all $i$, $\bar s_i < s_i$, and $S$ is \emph{red-dominated} if, for all $i$, $\bar s_i > s_i$. See also paper \cite{L2014} for a similar construction. Hence it is clear that a sequence cannot be both red-dominated and green-dominated, and of course, `most' increasing sequences are neither. 
Note that for example $\beta$-Chromatic Nim from Section~\ref{S:2} is green-dominated, since $\beta>2$. We have the following lemma for any green-dominated game $S$-Chromatic Nim.

\begin{lemma}\label{lem:1}
Suppose that $S$-Chromatic Nim is green-dominated. Let $$\{(a_i,b_i), (b_i,a_i) \mid i\in \mathbb{N} \}$$ denote its set of $\mathcal{P}$-positions, where for all $i\geqslant 0$, $a_i\leqslant b_i$. Then, for all $i>0$, the $i$th green token from below is the $a_i$th token and the $i$th red token from below is the $b_i$th token. Therefore, for all $i$, $a_i<b_i$, which implies that all monochromatic positions are $\mathcal{N}$-positions. 
\end{lemma}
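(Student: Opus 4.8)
The plan is to pin down the set of $\mathcal{P}$-positions explicitly and then simply read off the stated identities. Writing $\overline s_0 = s_0 = 0$ for convenience, I propose the candidate set
$$\mathcal{Q}=\{(\overline s_i,s_i),(s_i,\overline s_i)\mid i\geqslant 0\},$$
and I will verify the two standard conditions that characterize the $\mathcal{P}$-positions of an impartial normal-play game: \textbf{(1)} no move from a position in $\mathcal{Q}$ lands in $\mathcal{Q}$, and \textbf{(2)} from every position outside $\mathcal{Q}$ there is a move into $\mathcal{Q}$. These two conditions determine the $\mathcal{P}$-positions uniquely (and force the terminal $(0,0)\in\mathcal{Q}$ to be a $\mathcal{P}$-position), so verifying them identifies $\mathcal{Q}$ with $\mathcal{P}$. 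Throughout I would use that, by definition, the token at height $\overline s_i$ is green and the token at height $s_i$ is red, that $S$ and $\overline S$ are disjoint, and that both sequences are strictly increasing.

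For condition \textbf{(1)} I would first observe that every non-terminal position of $\mathcal{Q}$ has one red coordinate (namely $s_i$), hence it is \emph{not} green; so the rule permitting arbitrary moves never applies there and only Nim moves, lowering exactly one coordinate, are available. I then check that neither Nim move from $(\overline s_i,s_i)$ can reach $\mathcal{Q}$: lowering the red coordinate keeps the green height $\overline s_i$ fixed, and strict monotonicity of $\overline S$ shows the only member of $\mathcal{Q}$ with green coordinate $\overline s_i$ is $(\overline s_i,s_i)$ itself; lowering the green coordinate keeps the red height $s_i$ fixed, and likewise the only member of $\mathcal{Q}$ with that red coordinate is $(\overline s_i,s_i)$. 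Disjointness of $S$ and $\overline S$ rules out the swapped forms $(s_j,\overline s_j)$. The symmetric position $(s_i,\overline s_i)$ is handled identically, and $(0,0)$ is terminal.

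For condition \textbf{(2)} I would split a position $(x,y)\notin\mathcal{Q}$ according to the colors of its top tokens. If both tops are green the whole position is green, so the arbitrary-move rule lets me collapse directly to $(0,0)\in\mathcal{Q}$. If both tops are red, say $x=s_p,\ y=s_q$ with $p\leqslant q$, I lower the larger red coordinate $y$ to the green height $\overline s_p$, reaching $(s_p,\overline s_p)\in\mathcal{Q}$; this move is legal precisely because green-domination gives $\overline s_p<s_p\leqslant s_q=y$. Finally, if the position is bichromatic, say (by symmetry) $x=\overline s_p$ green and $y=s_q$ red with $p\neq q$, I compare indices: if $p<q$ I lower $y$ to $s_p$ to reach $(\overline s_p,s_p)$, and if $p>q$ I lower $x$ to $\overline s_q$ to reach $(\overline s_q,s_q)$, using strict monotonicity in each case; the boundary value $x=0$ fits the same scheme under the convention $\overline s_0=0$.

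Once $\mathcal{Q}=\mathcal{P}$ is established, the lemma follows at once: every $\mathcal{P}$-position except $(0,0)$ has one green height $\overline s_i$ and one red height $s_i$, and as $i$ ranges over the positive integers these exhaust, respectively, all green and all red heights. Ordering the $\mathcal{P}$-pairs by level therefore yields $a_i=\overline s_i$ (the $i$th green token from below) and $b_i=s_i$ (the $i$th red token from below); green-domination gives $a_i=\overline s_i<s_i=b_i$, and since no $\mathcal{P}$-position with $i>0$ is monochromatic, every nonempty monochromatic position must be an $\mathcal{N}$-position. I expect the crux to be the reachability condition, and within it the red--red case, since that is the only place where the green-domination hypothesis does genuine work: escaping a red--red position forces one to lower a red stack \emph{past} a green height, and the inequality $\overline s_p<s_p$ is exactly what guarantees such a target lies below the current height (without it this step can fail, which is why the statement is restricted to green-dominated $S$). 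The remaining cases should reduce to routine bookkeeping once the colors are sorted out, relying only on disjointness and strict monotonicity of the two complementary sequences.
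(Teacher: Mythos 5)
Your proof is correct and follows essentially the same route as the paper's: the candidate set $\mathcal{Q}=\{(\overline s_i,s_i)\}$ is exactly the paper's characterization ``number of green tokens in the green heap equals number of red tokens in the red heap,'' and your case analysis (both green $\to(0,0)$; mixed $\to$ equalize via monotonicity; both red $\to$ use green-domination to drop the taller heap to a matching green height) mirrors the paper's verification move for move. Your write-up is merely a more explicit, sequence-indexed rendering of the paper's token-counting argument, with the same identification of where green-domination is genuinely needed.
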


\begin{proof}
First of all it is clear that if both heaps are green, then there is a move to $(0,0)$, so we assume first that one of the heaps is red and the other green. If in addition, the red heap contains the same number of red tokens as the number of green tokens in the green heap, then there is no nim-type move to a position of the same form. But these are the only legal type of moves, so the ``$\mathcal{P}$ cannot go to $\mathcal{P}$ property" is satisfied. Now, if the number of red tokens in the red heap is different from the number of green tokens in the green heap, then we have to find a candidate $\mathcal{P}$-position to move to. If there are more red tokens in the red heap than there are green tokens in the green heap, then there is a nim-type move that equalizes the numbers. Hence, only the case for two red heaps remains to be considered. One of the heaps contains no more red tokens than the other. Then, because of the green-dominated property, the other heap contains at least as many green tokens as the number of red ones in the former. Hence a Nim-type move suffices to reduce the taller red heap to a green heap with as many green tokens as the number of red ones in the previously smaller red heap. The base case is that since the game is green-dominated, when the first red token appears, then there is a green below, so the above proof applies.
\end{proof}

How do you play to win a green-dominated game? Let us summarize the proof of Lemma~\ref{lem:1}.
\begin{prop}\label{prop:easyN}
If the heaps have different colors and the red heap has $r$ red tokens and the green heap has $g$ green tokens, with $r=g$, then there is no winning move for the current player. Otherwise the first player should remove $r-g$ red tokens from the red heap if $r>g$ and $g-r$ green tokens from the green heap if $g>r$, in either case keeping the color of the changed heap the same. If both heaps are green you move to $(0,0)$. If both heaps are red and $r$ is the number of red tokens in the smaller heap, then play in the larger (or equally sized) heap so that it becomes green with $r$ green tokens. 
\end{prop}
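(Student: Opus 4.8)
The plan is to read this proposition off Lemma~\ref{lem:1}, since the two statements describe the same set of $\mathcal{P}$-positions in different languages. First I would restate the conclusion of Lemma~\ref{lem:1} in token-counting terms: a position is a $\mathcal{P}$-position exactly when one heap is green on top with $i$ green tokens and the other is red on top with the same number $i$ of red tokens, which in the notation of the lemma means its two heights are $a_i$ and $b_i$ (in some order), with $a_0 = b_0 = 0$. Because the sequences $(a_i)$ and $(b_i)$ record the positions of the $i$th green and $i$th red token from below, they are strictly increasing, and green-dominance gives $a_i < b_i$ for every $i$; these two monotonicity facts are what each legality check below will rely on. In particular, a red-topped heap with exactly $r$ red tokens has height $b_r$, and a green-topped heap with exactly $g$ green tokens has height $a_g$.

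With this dictionary in hand, the first assertion is immediate: if the heaps have different colors and $r = g$, the position is $(a_r, b_r)$ up to order, hence a $\mathcal{P}$-position by Lemma~\ref{lem:1}, and so by definition the current player has no winning move. (Equivalently, one repeats the ``$\mathcal{P}$ cannot reach $\mathcal{P}$'' step already in the lemma: the position is not green, so only Nim moves are available, and any such move changes exactly one of $r, g$ and therefore destroys the equality.)

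It then remains to verify, for each of the four remaining configurations, that the prescribed move is both legal and lands on a $\mathcal{P}$-position. In the two different-color cases the position is not green, so only Nim moves are allowed, but lowering a single heap is exactly such a move. If $r > g$, lowering the red heap from $b_r$ to $b_g$ leaves it red on top with $g$ red tokens and produces $(a_g, b_g)$; this is legal because $b_g < b_r$. Symmetrically, if $g > r$, lowering the green heap from $a_g$ to $a_r$ produces $(a_r, b_r)$ and is legal because $a_r < a_g$. If both heaps are green the position is green, so every move is permitted, and the move to $(0,0) = (a_0, b_0)$ is one of them. Finally, if both heaps are red, let $r$ be the red count of the smaller heap, so that heap has height $b_r$ while the larger heap has height at least $b_r$; lowering the larger heap to height $a_r$ makes it green on top with $r$ green tokens and yields $(a_r, b_r)$.

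I do not expect a serious obstacle, as the proposition is essentially a reformulation of Lemma~\ref{lem:1} and the work is bookkeeping. The subtlest point is the both-red case, where one must be certain that reducing a red heap down to a \emph{green} target height $a_r$ is a genuine, strictly decreasing move; this is precisely where $a_r < b_r$, and hence the green-dominated hypothesis, is used, and the argument would break for a red-dominated sequence.
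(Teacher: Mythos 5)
Your proof is correct and takes essentially the same route as the paper: the paper's own proof simply declares the proposition a direct consequence of Lemma~\ref{lem:1}, adding only that the move in the both-red case is always possible because of the green-dominated property ($a_r < b_r$) --- precisely the point you single out as the subtlest. Your write-up merely makes explicit the dictionary (red-topped heap with $r$ red tokens has height $b_r$, green-topped heap with $g$ green tokens has height $a_g$) and the legality bookkeeping that the paper leaves implicit.
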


\begin{proof}
This is a direct consequence of Lemma \ref{lem:1}. Notice again, for the last sentence, this is always possible, because of the green-dominated property.
\end{proof}

For $\beta$-Chromatic Nim with a rational $\beta >2$, we know a winning strategy via Proposition~\ref{prop:easyN}, but do not yet have a complete characterization extending Theorems~\ref{two} and \ref{one}. 


When $S$ is red-dominated, then Lemma~\ref{lem:1} and Proposition~\ref{prop:easyN} are no longer true, and it is easy to see because the base case fails. Let $\beta=3/2$. Then, for $\beta$-Chromatic Nim, $S=\{1,3,4,6,7,9,\ldots\}$. This sequence is not green-dominated since, for example, the stack with one token has one red token and no green one. It is red-dominated because $\bar S=\{2,5,8,11\ldots\}$, and so $s_i<\bar s_i$ for all $i$. In fact, the smallest non-terminal $\mathcal{P}$-position is the red position $(1,1)$, so the conclusions of Lemma~\ref{lem:1} and Proposition~\ref{prop:easyN} are false. Let us list the first few $\mathcal{P}$-positions of this game, to obtain intuition for the next result on red-dominated sequences. In Figure~\ref{F:2}, we see the three first P-positions of $\frac{3}{2}$-Chromatic Nim, and the pictures illustrate how colors can be either both red or mixed. But there is a simple explanation to this behavior. We use the following definition.

\begin{definition}\label{def:1}
The game $S$-Chromatic Nim is \emph{locally green-dominated} (lgd) at level $d\in S$, if there is some positive integer $k\in \bar S$, such that the $k$-shifted heaps (with the lower $k-1$ tokens removed) induces a local green-dominated game on $d$ tokens, colored according to (a re-indexed) $S\setminus\{0,\ldots , k-1\}$. The local green-domination is maximal if the game is not lgd at level $d+1$.
\end{definition}

In a sense the red-dominated games behave like Nim, on the red tokens, but any local green-dominance has to be compensated for; we can use a local variation of Lemma~\ref{lem:1} to compute the $\mathcal{P}$-positions, until the position is not lgd any longer, at which point the old Nim-strategy will reappear (see the third picture).

\begin{figure}[ht!]
\psset{xunit=.8cm}
\psset{yunit=0.65cm}
\begin{pspicture}(3.2, 2)
\rput(2,0){
\multirput(-.47,-.3 )(2, 0){1}{\es}
\multirput(0,0)(0,0.3){1}{\tokr}
\multirput(2,0)(0,0.3){1}{\tokr}
}
\rput(6.5,0){
\multirput(-.47,-.3 )(2, 0){1}{\es}
\multirput(0,0)(0,0.3){3}{\tokr}
\multirput(2,0)(0,0.3){1}{\tokr}
\multirput(0,.3)(0,0.3){1}{\tokg}
\multirput(2,.3)(0,0.3){1}{\tokg}
}
\rput(11,0){
\multirput(-.47,-.3 )(2, 0){1}{\es}
\multirput(0,0)(0,0.3){4}{\tokr}
\multirput(2,0)(0,0.3){4}{\tokr}
\multirput(0,.3)(0,0.3){1}{\tokg}
\multirput(2,.3)(0,0.3){1}{\tokg}
}

\end{pspicture}
\caption{The first three $\mathcal{P}$-positions of $\frac{3}{2}$-Chromatic Nim.}\label{F:2}

\end{figure}
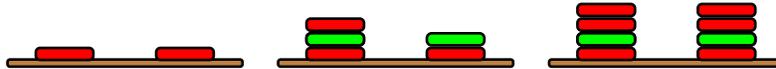

\begin{prop}\label{prop:red}
Consider a red-dominated game $S$ and $d$ a positive integer. Then $(d,d)$ is a $\mathcal{P}$-position if and only if the game is not locally green-dominated at level $d$. Otherwise, we consider the locally green-dominated game beginning at some minimal level $k$, that is, we apply Lemma~\ref{lem:1} to compute the $\mathcal{P}$-positions with level $k$ exchanged for level $1$ (a green token). This computation stops at a level where the lgd is maximal.
\end{prop}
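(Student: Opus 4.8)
The plan is to encode the coloring by the height function $D(j)=g(j)-r(j)$, where $g(j)$ and $r(j)$ count the green and red tokens among the first $j$ levels; thus $D(0)=0$, $D$ increases by $1$ at each green level and decreases by $1$ at each red level, and red-domination is exactly the statement that $D(j)\le 0$ for all $j$ (so level $1$ is red). First I would recast Definition~\ref{def:1} in these terms: the game is locally green-dominated at a red level $d$ precisely when some green $k\le d$ satisfies $D(i)\ge D(k-1)$ for every $i\in[k,d]$, i.e. $\min_{[k,d]}D\ge D(k-1)$. I would then prove the clean dichotomy behind the first sentence: for $d\in S$, the game fails to be lgd at $d$ if and only if $D$ attains a strict new minimum at $d$ (a \emph{record low}, $D(d)<D(i)$ for all $i<d$). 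One direction is immediate, since a record low forces $\min_{[k,d]}D\le D(d)<D(k-1)$ for every earlier green $k$. For the other direction I would show that a non-record-low red level is lgd: take $j$ to be the largest index $<d$ with $D(j)\le D(d)$, check that then $D(j)=\min_{[j,d]}D$, note that level $j+1$ must be green (a red step there would undercut this minimum), and use $k=j+1$. For a green $d$ the position $(d,d)$ is green, hence an $\mathcal N$-position and lgd is undefined, so we focus on $d\in S$.

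Next I would organise the heights around the record lows $d_1<d_2<\cdots$ (with $D(d_t)=-t$, each $d_t\in S$). The diagonal $\mathcal P$-positions will be exactly the $(d_t,d_t)$, while the open interval $(d_t,d_{t+1})$ is the $t$th \emph{pocket}. The key point is that each pocket, after deleting its bottom $d_t$ tokens so that level $k=d_t+1$ becomes a green ``level $1$'', is a green-dominated block: $D(i)-D(d_t)\ge 0$ there, and when the pocket is finite it is moreover \emph{balanced}, equality holding at both ends, so greens and reds match up and Lemma~\ref{lem:1} applies verbatim up to the top of the pocket, which is the maximal lgd level (the last pocket may be infinite, where Lemma~\ref{lem:1} applies directly). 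I would also confirm that $d_t+1$ is the \emph{minimal} admissible green start, since shifting from any green below $d_t$ makes the shifted height negative at $d_t$. Lemma~\ref{lem:1} then pairs the $i$th green of the pocket, at level $a_i$, with its $i$th red, at level $b_i$, with $a_i<b_i$, and I would package everything into an involution $\pi$ of the heights that fixes $0$ and every record low and swaps $a_i\leftrightarrow b_i$ inside each pocket. The claimed set of $\mathcal P$-positions is then $\mathcal P^\ast=\{(h,\pi(h))\}$.

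It remains to verify the two defining properties of $\mathcal P$-positions for $\mathcal P^\ast$. No non-terminal position of $\mathcal P^\ast$ is green (a diagonal top is red, and a pocket position has one red and one green top), so only Nim moves are available from it; since $\pi$ is injective, altering one heap destroys the pairing, which gives ``$\mathcal P^\ast$ cannot reach $\mathcal P^\ast$''. For ``every other position reaches $\mathcal P^\ast$'' I would send a doubly-green position to $(0,0)$, and otherwise, given $(x,y)\notin\mathcal P^\ast$ with $x\le y$, lower $y$ to $\pi(x)$ when $\pi(x)<y$ and lower $x$ to $\pi(y)$ otherwise. The main obstacle is exactly this last step: ruling out a \emph{stuck} configuration with $\pi(x)>y$ and $\pi(y)>x$. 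Here I would argue that $\pi(x)>y\ge x$ forces $x$ to be a pocket green whose red partner $\pi(x)$ lies above it; that the absence of record lows strictly inside a pocket pins $y$ into the \emph{same} pocket; and that within a pocket the pairings $a_i$ and $b_i$ are both increasing, so $\pi(x)>y$ and $\pi(y)>x$ would force $g>r$ and $r>g$ at once, a contradiction. The remaining work is bookkeeping: checking that the within-pocket moves produced by Lemma~\ref{lem:1}—including sending a doubly-green pocket position to the true $(0,0)$, which is legal precisely because such a position is green in the full game—never overshoot a record low, so that they remain legal Nim moves of the global game.
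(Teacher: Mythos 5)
Your proposal is correct, and at the structural level it matches the paper's proof: your record lows and pockets are precisely the paper's ``Class~1'' (non-lgd levels) and ``Class~2'' (maximal lgd intervals), and both arguments run Lemma~\ref{lem:1} inside each block, producing the same set of positions (diagonals at non-lgd levels, green--red pairs inside the blocks). Where you genuinely diverge is in execution. The paper argues by induction with a restart: $(1,1)$ behaves like Nim, the first green level opens a Class~2 block governed by Lemma~\ref{lem:1}, and when lgd becomes maximal at level $d$ the diagonal $(d+1,d+1)$ is $\mathcal{P}$ because the red token at level $d+1$ has no unmatched green below it; the analysis then restarts from level $d+1$ as if from level $1$. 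You instead (i) prove a ballot-style dichotomy --- a red level fails to be lgd if and only if the height function $D$ attains a strict record low there --- which is a precise, checkable characterization of the paper's Class~1 that the paper never states or proves (it is what makes Definition~\ref{def:1} operational, and your observation that finite pockets are balanced, so the pairing is a genuine involution, is likewise only implicit in the paper); and (ii) replace the inductive restart by a direct global verification: injectivity of $\pi$ gives ``$\mathcal{P}$ cannot reach $\mathcal{P}$,'' while the stuck-configuration analysis ($\pi(x)>y$ and $\pi(y)>x$ forcing contradictory orderings of pocket indices) gives ``everything else reaches $\mathcal{P}$,'' which also yields an explicit winning-move rule in the spirit of Proposition~\ref{prop:easyN}. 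Your route buys rigor exactly where the paper is loosest (its closing induction is a sketch); the paper's route is shorter and displays the self-similar recursive structure more vividly. One small point: your final worry about within-pocket moves ``remaining legal'' is moot in this ruleset, since Nim moves are always legal and the only non-Nim move you invoke (a doubly green position to $(0,0)$) is licensed by the green rule, exactly as you note.
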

\begin{remark}
In Figure 2, we obtain the first non-zero $\mathcal{P}$-position as $(1,1)$. (Only red tokens behave as Nim.) The second $\mathcal{P}$-position is of the second type (with $k=2$), because colors of heaps are mixed. The way to compute the $\mathcal{P}$-positions in a lgd game is to apply the algorithm for green-dominated games, but here starting with the 2nd green layer of tokens rather than the first red layer. Now, already at level~3 ($d=3$) the lgd becomes maximal, so in fact the green-dominated algorithm terminates in just one step. (Notice here that Definition~\ref{def:1} is satisfied with $k=2$ and $d=3$.)
\end{remark}
\begin{proof}
By Definition~\ref{def:1}, the stack-sizes can be partitioned into two classes. Class~1 is the set of all non-lgd's and Class~2 is the set of all (maximal) lgd's. The latter contains the discrete intervals of the form $\{k,\ldots , d\}$ as in Definition~\ref{def:1}. Since we are discussing red-dominated games the base case is as the left-most picture in Figure \ref{F:2}, level 1 is red. Now, there is a smallest green token, at say level $k$. It will be the first Class 2 token. Since the first red token above this green one exists (by red-dominating), say at level $d>k$, the first Class 2 $\mathcal{P}$-position will be $(k,d)$. The Class 2 $\mathcal{P}$-positions will have to continue as outlined in Lemma~\ref{lem:1} until perhaps the level-$k$ adjusted  green-dominating property fails. (It does not have to fail, because the set $S$ can still be red-dominated, because of the initial layer(s) of red tokens could compensate for example a local green dominated periodic behavior.) If it fails, then there will be a maximal (red) token, say at level $d>k$ for which lgd holds. Then $(d+1, d+1)$ is a $\mathcal{P}$-position. It follows from the fact that the red token at level $d+1$ cannot be paired up with a green token below, because, by Lemma~\ref{lem:1}, they have already been matched up with lower red tokens. Now the proof follows by induction, since the continuation from level $d+1$ onwards is the same as restarting from level 1, with $d+1$ exchanged for 1. (Because only Nim-type moves are allowed, by induction, no lower $\mathcal{P}$-positions can be reached in a single move, and the special move rule for green positions can just as well be used to move to $(d+1, d+1)$, which is $\mathcal{P}$ by induction.
\end{proof}
Now it is easy to combine Proposition~\ref{prop:easyN} with the proof of Proposition~\ref{prop:red} to find your winning move. The clue is to identify the non-lgd sequences to know the excess of red tokens that need to be subtracted in order to use the green-dominated result correctly. But, this is easy by the recursive argument. We note that this general winning strategy requires a bottom-up approach and is therefore slow compared to the results in Sections~\ref{S:2} and \ref{S:3}.

\section{More fractal rules and strategies}\label{S:6}
We say that a non-negative integer is \emph{evil} if it has an even number of ones in its binary expansion;  otherwise it is called \emph{odious}.   In this section, we let $S$ be the set of evil integers and call the resulting game \emph{evil-Chromatic Nim}.  Thus, in this game, the evil integers are red and the odious integers are green.

The following observation is not strictly needed in the proofs to come, but we prove it anyway.
  
\begin{note}\label{evil:4} {There cannot be more than two consecutive odious numbers (evil numbers) in any sequence of consecutive integers.}
\end{note}

\begin{proof} Suppose $n, {n+1}, {n+2}$ are three consecutive odious numbers.  Note that $n$ has to be odd or the parity of the number of 1's for ${n+1}$ will be incorrect.  Because of the constraints on $n$, $n$ has to look like   $$\begin{array}{rrrrr}
(i)  &{n = \underbrace{11\cdots1}_{2i+1 \ 1\text{'s}}}  &\text{or} &{(ii)}  &{n = \underbrace{1\ast \cdots\ast}_{2i \ 1\text{'s}} \circled{0}\underbrace{11 \cdots 1}_{2j+1 \  1\text{'s}}} \\
\end{array}$$
where $(i)$ consists of an odd number of consecutive 1's and $(ii)$ begins from the left with a 1 followed by 1's and 0's, with an even number of 1's to the left of the circled 0 (a $*$ indicates a 0 or a 1).  Then  $$\begin{array}{rrrrr}
(i)  &{ {n+1} = \mathbf{1}\underbrace{00\cdots0}_{2i+1 \ 0\text{'s}}}  &\text{or}  &{(ii)}  &{{n+1} = \underbrace{1\ast \cdots\ast}_{2i \ 1\text{'s}}  \circled{1}\underbrace{00 \cdots 0}_{2j +1 \  0\text{'s}}.}
\end{array}$$  But then, in either case, ${n+2}$ will contain an even number of 1's and thus be evil.  The case for evil numbers is similar. \end{proof}

\begin{definition} Let $k$ be a nonnegative integer and let $U \subseteq [k] \cup \{0\} = \{0, 1, \dots, k\}$ be a subset consisting of consecutive integers.  Then the \textbf{pseudo-chromatic number} of $U$, denoted $\tau(U)$, is defined to be the difference $$(\# \text{ of green numbers in } U)-(\# \text{ of red numbers in } U).$$  In the special case where $U = [k] \cup \{0\}$, we write $\tau(k)$ for $\tau(U)$.
\end{definition}


\begin{lemma}\label{evil:1} For any nonnegative integer $n$, $-1 \leq \tau(n) \leq 1$.  


\end{lemma}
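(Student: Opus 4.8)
The plan is to rewrite $\tau$ as a signed counting sum and exploit a pairing of consecutive integers. Writing $\epsilon_k = +1$ when $k$ is odious (green) and $\epsilon_k = -1$ when $k$ is evil (red), we have $\tau(n) = \sum_{k=0}^{n} \epsilon_k$, since each green number contributes $+1$ and each red number $-1$ to the difference defining $\tau(n)$. The whole argument then rests on a single observation: for every $k \geq 0$, exactly one of the two consecutive integers $2k$ and $2k+1$ is evil and the other odious, so that $\epsilon_{2k} + \epsilon_{2k+1} = 0$.

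First I would establish this pairing fact. Since $2k$ is even, the last bit of its binary expansion is $0$; the binary expansion of $2k+1$ is identical except that this trailing bit is now $1$. Hence $2k+1$ has exactly one more $1$ than $2k$, so the parities of their binary digit sums differ, and one of the pair is evil while the other is odious. Consequently $\epsilon_{2k} + \epsilon_{2k+1} = (+1) + (-1) = 0$ for every $k$.

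With the pairing in hand I would finish by a parity split, phrased as a short induction. For an odd upper limit $n = 2m+1$, the range $\{0, 1, \dots, 2m+1\}$ decomposes into the complete consecutive pairs $\{0,1\}, \{2,3\}, \dots, \{2m, 2m+1\}$, each contributing $0$, so $\tau(2m+1) = 0$. For an even upper limit $n = 2m$, the same decomposition leaves one unpaired term, giving $\tau(2m) = \tau(2m-1) + \epsilon_{2m} = \epsilon_{2m} \in \{-1, +1\}$ (with the base case $\tau(0) = \epsilon_0 = -1$, since $0$ is evil). In every case $\tau(n) \in \{-1, 0, 1\}$, which is exactly the claimed bound.

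I do not expect a genuine obstacle: once the pairing $\epsilon_{2k} + \epsilon_{2k+1} = 0$ is isolated, the result is immediate. The only points requiring care are the bookkeeping of the base case $\tau(0) = -1$ and the indexing in the even/odd split, so that the unpaired term is correctly identified as $\epsilon_{2m}$ and no off-by-one error creeps in. It is worth remarking that this argument in fact pins down $\tau$ exactly, namely $\tau(n) = 0$ for odd $n$ and $\tau(n) = \pm 1$ for even $n$, which is stronger than the stated inequality and will presumably be convenient later.
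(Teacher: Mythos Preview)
Your proof is correct and considerably more direct than the paper's. The paper argues by a recursive block decomposition: after tabulating the base cases $0\le n\le 7$, it builds the list $\{0,\dots,2^m-1\}$ from four blocks obtained by prefixing $\{0,\dots,2^{m-2}-1\}$ with the two-bit strings $00,01,10,11$, notes that each block has total pseudo-chromatic number $0$, and then uses the prefix-swapping bijections $00\leftrightarrow 11$ and $01\leftrightarrow 10$ (which preserve the evil/odious quality) to transport the inductive bound into the upper half containing $n$. Your pairing $\{2k,2k+1\}$ is the two-element instance of the same parity-of-ones symmetry and makes the block machinery unnecessary: it yields at once the exact values $\tau(n)=0$ for odd $n$ and $\tau(n)=\epsilon_n\in\{-1,+1\}$ for even $n$. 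What the paper's approach buys is a structural picture of the Thue--Morse self-similarity at scale $2^m$, which resonates with later arguments; what your approach buys is a one-line proof and the sharper conclusion, which indeed anticipates the refinement stated next as Lemma~\ref{evil:2}.
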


\begin{proof}  The proof will proceed by induction on $n$.  For our base cases, we consider the integers 0 through 7.

$$\begin{array}{c|c|c|r}
\textbf{Integer} &\textbf{Binary Representation} &\textbf{Green/Red} &\tau \\ \hline
0 &000 &\text{red} &-1\\

1 &001 &\text{green} &0 \\

2 &010 &\text{green} &1 \\
 
3 &011 &\text{red} &0 \\

4 &100 &\text{green} &1 \\

5 &101 &\text{red} &0 \\

6 &110 &\text{red} &-1 \\

7 &111 &\text{green} &0 \\

\end{array}$$

Longer lists (of length $2^j$) can be built recursively as follows.

\begin{displaymath}
   \xymatrix{A &{\begin{array}{c|r} {} &\tau \\ \hline \mathbf{00}00 & {-1} \\  \mathbf{00}01 &{0} \\ \mathbf{00}10 & {1} \\   \mathbf{00}11 & {0}  \end{array}}  \ar@/^4pc/[ddd] \\
    B &{\begin{array}{c|r} \mathbf{01}00 &1\\ \mathbf{01}01&0 \\ \mathbf{01}10 &-1 \\ \mathbf{01}11 &0\end{array}}  \ar@(dl,dr)[d] \\ 
   C& {\begin{array}{c|r} \mathbf{10}00 &1\\ \mathbf{10}01&0 \\ \mathbf{10}10 &-1 \\ \mathbf{10}11 &0\end{array}} \\
   D& {\begin{array}{c|r} \mathbf{11}00 & {-1} \\  \mathbf{11}01 &{0} \\ \mathbf{11}10 & {1} \\   \mathbf{11}11 & {0}  \end{array}} }
\end{displaymath}

Notice that the above figure is constructed by prefixing the block of the binary numbers $00, 01, 10,$ and $11$ with $00$, $01$, $10$, or $11$, respectively. Further, observe that blocks $A$, $B$, $C$, and $D$ each have pseudo-chromatic number equal to $0$. Hence, $\tau(15) = 0$. 

Moreover, notice the maps illustrated in the figure from $A$ to $D$ and from $B$ to $C$ preserve the evil/odious quality of each integer and its respective pseudo-chromatic number.

Next assume that our result holds for all $j$ such that $1 \leq j < n$.  Find $m >0$ so that $2^{m-1} \leq n \leq 2^m-1$.  Using the recursive construction described above, we construct the list below of length $2^m$.

\begin{displaymath}
   \xymatrix{A &{\begin{array}{c|r} {} &\tau \\ \hline \mathbf{00}00 \cdots 0 &-1 \\ \vdots &{\vdots} \\ \mathbf{00}11\cdots 1 &0 \end{array}}  \ar@/^5pc/[ddd] \\
    B &{\begin{array}{c|r} \mathbf{01}00  \cdots 0 &1\\ \vdots &\vdots \\ \mathbf{01}11\cdots 1 & 0\end{array}}  \ar@(dl,dr)[d] \\ 
   C& {\begin{array}{c|r} \mathbf{10}00 \cdots 0 &1 \\ \vdots &\vdots\\ \mathbf{10}11\cdots 1&0 \end{array}} \\
   D& {\begin{array}{c|r} \mathbf{11}00 \cdots 0 &-1 \\ \vdots &\vdots \\ \mathbf{11}11\cdots 1 &0 \end{array}}}
\end{displaymath}

Note that $n$ is either in block $C$ or block $D$.  By induction and the recursive construction of the list, the desired result holds for $n$. \end{proof}

\begin{definition} Given a positive integer $k$, we define the \textbf{chromatic number} of the set $[k] = \{1, 2, \dots, k\}$, denoted by $\chi(k)$, by $\chi(k) = \tau([k])+1.$
\end{definition}

The next lemma refines Lemma~\ref{evil:1}.

\begin{lemma}\label{evil:2} If $k$ is odious and even, then $\chi(k) =2$.  If $k$ is evil and even, then $\chi(k) = 0$.  $\chi(k) = 1$ for every positive odd number $k$.
\end{lemma}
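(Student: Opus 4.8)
The plan is to use the simplest structural feature of the evil/odious coloring, namely its behavior under appending a binary digit. First I would record the following pairing fact: for every integer $j \geq 0$, exactly one of $2j$ and $2j+1$ is evil and the other is odious. This is immediate from binary representations, since $2j$ is the binary word of $j$ followed by a $0$ while $2j+1$ is the same word followed by a $1$; the two numbers therefore differ by exactly one in their count of $1$'s and so have opposite parities. Consequently each block $\{2j, 2j+1\}$ contains exactly one red and one green token and contributes $0$ to any pseudo-chromatic count.

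With this in hand I would evaluate $\tau$ on the initial segment $[k] \cup \{0\} = \{0,1,\ldots,k\}$, whose value is $\tau(k)$, by splitting it into the consecutive blocks $\{0,1\}, \{2,3\}, \ldots$ and asking whether $k$ closes a block. If $k$ is odd, say $k = 2m+1$, the segment is an exact union of the $m+1$ balanced blocks $\{0,1\}, \ldots, \{2m,2m+1\}$, so $\tau(k) = 0$. If $k$ is even, say $k = 2m$, the segment consists of the $m$ balanced blocks $\{0,1\}, \ldots, \{2m-2, 2m-1\}$ together with the single leftover element $k$, so $\tau(k)$ equals the contribution of $k$ alone, namely $+1$ when $k$ is odious and $-1$ when $k$ is evil. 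This argument simultaneously reproves Lemma~\ref{evil:1} and pins down exactly when the extreme values $\pm 1$ are attained, and it never needs Observation~\ref{evil:4}.

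It then remains to pass from $\tau(k)$ to $\chi(k)$. By the definition of the chromatic number we have $\chi(k) = \tau(k) + 1$, so the three computed values $\tau(k) \in \{0, +1, -1\}$ translate directly into $\chi(k) = 1$ when $k$ is odd, $\chi(k) = 2$ when $k$ is even and odious, and $\chi(k) = 0$ when $k$ is even and evil, which is precisely the assertion.

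I do not expect a genuine obstacle: once the pairing is observed, the computation is essentially a parity count. The only delicate point is the bookkeeping at the top of the segment, that is, correctly deciding whether the largest element $k$ lies inside a complete balanced block or is the leftover, and correctly accounting for the red element $0$ at the bottom when translating between $\tau(k)$ and $\chi(k)$. Getting that offset right is the single place where a sign or an off-by-one error could slip in.
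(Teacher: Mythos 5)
Your proof is correct, and it takes a genuinely different and simpler route than the paper's. The paper proves this lemma by writing $k$ in binary with ones in positions $i_1, \dots, i_j$, partitioning the integers below $k$ into dyadic blocks $L(k,i_\ell)$ (translates of $\{0,\dots,2^{i_\ell}-1\}$), using that every full translated dyadic block is color-balanced, and then running a case analysis (odd/even crossed with evil/odious) to handle the first and last blocks. Your pairing of $2j$ with $2j+1$ is the same balance phenomenon invoked only at the smallest scale: appending a final bit $0$ or $1$ to $j$ changes the digit sum by exactly one, so each pair $\{2j,2j+1\}$ contains one red and one green number. This single observation makes the paper's case analysis collapse; it is self-contained (it does not need the implicit step that a translated dyadic block stays balanced, which the paper leans on via Lemma~\ref{evil:1}), and, as you note, it re-derives Lemma~\ref{evil:1} for initial segments at no extra cost. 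The paper's heavier decomposition buys nothing additional for this particular lemma beyond consistency with the recursive prefix construction used in its proof of Lemma~\ref{evil:1}. One caveat on your last step deserves to be made explicit: the paper's written definition is $\chi(k)=\tau([k])+1$ with $[k]=\{1,\dots,k\}$, and since $0$ is red this would literally give $\chi(k)=\tau(k)+2$, under which the lemma itself would be false (e.g.\ it would force $\chi(7)=2$ rather than $1$). The reading $\chi(k)=\tau(k)+1$ that you adopt is the only one consistent with the lemma's statement and with the chromatic-number tables in the proof of Theorem~\ref{evil:5}, so your flagged ``delicate point'' is exactly the right one, and your resolution agrees with the paper's evident intent.
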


\begin{proof}   Suppose that $k$ has binary expansion $$k = 1\underbrace{00\cdots0}_{z_1\geq0 \ 0\text{'s}}1\underbrace{00\cdots0}_{z_2\geq0 \ 0\text{'s}}10 \cdots 0 1\underbrace{00\cdots0}_{z_{j-1}\geq0 \ 0\text{'s}}1\underbrace{00\cdots0}_{z_{j}\geq0 \ 0\text{'s}},$$  where $k$ has $j$ ones in positions $i_1, i_2, \dots, i_j$ (reading from left-to-right) and the $z_{\ell}$'s give the number of zeros after each one.  For the purposes of this proof, we let $k(i_{\ell})$ denote the number in binary notation derived from $k$ by changing the $i_{\ell}$-th one and any nonzero bit associated to a power of 2 less than it to a zero.

For example, if $k = 10011001$, then $i_1 = 7, i_2 = 4, i_3 = 3, i_4 = 0$, $z_1 = 2, z_2 = 0, z_3 = 2, z_4=0$, and $k(i_3) = k(3) = 10010000$ (note that the one counted by $i_1$ is associated with $2^7$).

Next, for a given positive integer $k$ and an associated $i_{\ell}$, we define $$L(k, i_{\ell}) = k(i_{\ell}) + \left\{\left(\sum_{r=0}^{i_{\ell}-1} c_r 2^r\right)_2 \ : \ c_r = 0 \ \text{or}\  c_r = 1 \right\}$$ if $\ell > 1$ and $$L(k, i_{1}) = \left\{\left(\sum_{r=0}^{i_{1}-1} c_r 2^r\right)_2 \ : \ c_r = 0 \ \text{or}\  c_r = 1 \right\}\setminus \{0\}.$$
For example, if $k = 10011001$, then $$L(k, i_{3}) = 10010000 + \{111,110,101,100,011,010,001,000 \}.$$
We are now ready to proceed with the proof.  We consider two cases.

\noindent{\sc Case 1:} $k$ is odd.  If $k$ is odious, then $\chi(L(k,i_j)) = -1$, since $L(k,i_j)$ consists of exactly one evil number.  Also observe that $\chi(L(k,i_1)) = 1$, since $L(k,i_1) = [2^{i_1 - 1}]\setminus \{0\}.$ However, $\chi(L(k,i_{\ell})) = 0$ for all $1 < \ell < j$ since $\chi([2^q]\cup \{0\}) = 0$ for all $q>0$.  Thus, $\chi(k) = 1 +1 - 1 = 1$, as desired.

If $k$ is evil, then $\chi(L(k,i_j)) = 1$, since $L(k,i_j)$ consists of exactly one odious number.  Also observe that $\chi(L(k,i_1)) = 1$, since $L(k,i_1) = [2^{i_1 - 1}]\setminus \{0\}.$ However, $\chi(L(k,i_{\ell})) = 0$ for all $1 < \ell < j$ since $\chi([2^q]\cup \{0\}) = 0$ for all $q>0$.  Thus, $\chi(k) = -1 +1 + 1 = 1$, as desired.

\bigskip

\noindent{\sc Case 2:} $k$ is even. If $k$ is odious, then $\chi(L(k,i_{\ell})) = 0$ for all $1 < \ell \leq j$ since $\chi([2^q]\cup \{0\}) = 0$ for all $q>0$. As in the case above, we have $\chi(L(k,i_1)) = 1$, since $L(k,i_1) = [2^{i_1 - 1}]\setminus \{0\}.$  Hence, $\chi(k) = 1 + 1 = 2$, as desired.

If $k$ is evil, then $\chi(L(k,i_{\ell})) = 0$ for all $1 < \ell \leq j$ since $\chi([2^q]\cup \{0\}) = 0$ for all $q>0$. Again, we have $\chi(L(k,i_1)) = 1$, since $L(k,i_1) = [2^{i_1 - 1}]\setminus \{0\}.$  Hence, $\chi(k) = -1 + 1 = 0$, as desired.
\end{proof}

  
Recall that the `\rm{mex}' of a nonempty set of nonnegative integers is defined to be the minimum excluded element. For example, if $S = \{0,1,2,7,9,13 \}$, then mex$(S)$ = 3.  Using the mex rule and Lemma~\ref{lem:1}, we characterize the
$\mathcal{P}$-positions of evil-Chromatic Nim in the next theorem.


 \begin{theorem}\label{evil:3}
 The set of $\mathcal{P}$-positions of evil-Chromatic Nim $\{(a_i, b_i) \ | \ i \geq 0\}$ can be computed recursively as follows. The only terminal $\mathcal{P}$-position is $(a_0,b_0) = (0,0)$.  Otherwise, $a_n = {\rm mex}\{a_i, b_i \  | \ i < n\}$  and $b_n$ is the smallest evil number such that $a_n < b_n$.
  \end{theorem}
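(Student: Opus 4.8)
The plan is to derive the recursion from the green-dominated theory of Section~\ref{S:5} together with the counting bounds of Lemmas~\ref{evil:1} and~\ref{evil:2}. The first step is to verify that evil-Chromatic Nim is green-dominated, i.e. that the $i$th odious (green) positive integer $o_i$ is strictly smaller than the $i$th evil (red) one $e_i$ for every $i$. Writing $\tau_+(v)$ for the green-minus-red count on $\{1,\dots,v\}$ and recalling that $0$ is itself evil, one has $\tau_+(v)=\tau(v)+1$, where $\tau(v)$ is the quantity of Lemma~\ref{evil:1} taken over $\{0,\dots,v\}$. Lemma~\ref{evil:1} gives $\tau(v)\ge -1$, hence $\tau_+(v)\ge 0$ for all $v\ge 1$, so greens weakly lead reds on every prefix; a one-line argument (testing the prefix ending at $e_i$) upgrades this to the strict statement $o_i<e_i$ for every $i$, which is exactly the green-domination hypothesis.

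With green-domination established I would invoke Lemma~\ref{lem:1} directly: the $\mathcal P$-positions are precisely $\{(a_i,b_i),(b_i,a_i)\mid i\ge0\}$ with $a_0=b_0=0$ and, for $i>0$, $a_i=o_i$ the $i$th green token and $b_i=e_i$ the $i$th red token. It then remains only to check that the stated mex recursion reproduces this pairing, which I would do by induction on $n$. Assuming $a_i=o_i$ and $b_i=e_i$ for all $i<n$, the set of already-used values is $\{0\}\cup\{o_1,\dots,o_{n-1}\}\cup\{e_1,\dots,e_{n-1}\}$. Since $o_n$ is odious and exceeds $o_1,\dots,o_{n-1}$ it lies in neither block, so the mex is at most $o_n$; and because green-domination forces $o_n\le 2n-1$ (if instead $o_n\ge 2n$ then at least $n$ of $\{1,\dots,2n-1\}$ are evil, giving $e_n\le 2n-1<o_n$, contradicting $o_n<e_n$), every integer below $o_n$ is $0$, some $o_j$ with $j<n$, or some $e_j$ with $j<n$. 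Hence the mex equals $o_n=a_n$. For the second coordinate, the smallest evil number exceeding $a_n=o_n$ that has not already been listed is $e_n$, since $e_n>o_n$ by domination while $e_1,\dots,e_{n-1}$ are used; thus $b_n=e_n$, closing the induction.

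The step that needs the most care, and the main obstacle, is the bookkeeping in the second coordinate. The phrase ``smallest evil number such that $a_n<b_n$'' must be read as the smallest \emph{not-yet-used} evil number, because the naive smallest evil above $a_n$ may already be occupied: at $n=8$, for instance, $a_8=14$ yet $e_7=15>14$ is already taken, so $b_8=17=e_8$ rather than $15$. The inequality $o_n\le 2n-1$ is exactly what rules this out, by guaranteeing that the used odious and evil numbers together exhaust all integers below $o_n$; this same count can be read off from Lemma~\ref{evil:2}, since $\chi(2n-1)=1$ says that precisely $n$ odious numbers lie in $\{1,\dots,2n-1\}$. Once the recursion is shown to generate the pairs $(o_i,e_i)$, the two defining $\mathcal P$-position properties are inherited from Lemma~\ref{lem:1}, completing the proof.
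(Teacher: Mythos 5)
Your proof is correct, and it takes the same route the paper does: the paper's entire proof of this theorem is the single sentence that it ``follows from Lemma~\ref{lem:1} and Lemma~\ref{evil:1},'' which is precisely your two steps --- deduce green-domination of the evil coloring from the bound $-1\le\tau(n)\le 1$, then read the recursion off the characterization $(a_i,b_i)=(o_i,e_i)$ supplied by Lemma~\ref{lem:1}. The added value in your writeup is the point you flag about the second coordinate, and it is a genuine catch: as literally stated, the recursion fails at $n=8$. There $a_8={\rm mex}\{0,1,\dots,13,15\}=14$, and the smallest evil number exceeding $14$ is $15$, but $15$ is already occupied as $b_7$ (the $\mathcal{P}$-position $(13,15)$); the actual $\mathcal{P}$-position is $(14,17)$, in agreement with Theorem~\ref{evil:5}, since $8$ is odious and dopey, giving $b_8=2\cdot 8+1=17$ and $a_8=b_8-3=14$. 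So the theorem is true only under your reading, with $b_n$ the smallest evil number \emph{not already used} --- equivalently the $n$th evil number $e_n$, which by green-domination automatically exceeds $a_n=o_n$, making the clause ``$a_n<b_n$'' redundant once stated correctly. The paper's one-line proof never confronts this subtlety, so your explicit induction is not merely a filled-in version of the paper's argument but also a needed repair of the statement it proves.
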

  
\begin{proof}  The proof of this result follows from Lemma~\ref{lem:1} and Lemma~\ref{evil:1} above. 
\end{proof}
  
  
Given the interesting number theory surrounding evil and odious numbers, we are able to refine the last result quite substantially. We say that a non-negative integer is \emph{vile} if its binary expansion ends in an even number of zeros; otherwise it is called \emph{dopey}.

 \begin{theorem}\label{evil:5}
The set of $\mathcal{P}$-positions of evil-Chromatic Nim $\{(a_i, b_i) \ | \ i \geq 0\}$ are given by $(a_0,b_0) = (0,0)$, and, for $n>0$, by
$$b_n = \left\{ \begin{array}{ll} 2n &\text{if } n \text{ is evil} \\
2n+1 &\text{if } n \text{ is odious}
\end{array} \right.$$ and 

$$a_n = \left\{ \begin{array}{ll} 
b_n-1 &\text{if } n \text{ is evil and dopey}\\
b_n-2 &\text{if } n \text{ is vile} \\
b_n-3 &\text{if } n \text{ is odious and dopey}
\end{array} \right.$$

 \end{theorem}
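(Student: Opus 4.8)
The plan is to reduce the theorem to closed forms for two enumeration sequences and then to translate a Thue--Morse parity statement into the vile/dopey dichotomy. First I would observe that evil-Chromatic Nim is green-dominated: green-domination of $S$ says $\bar s_i < s_i$ for all $i$, i.e. the $i$th odious positive integer precedes the $i$th evil one, and this is exactly the statement that the number of odious integers never falls behind the number of evil ones in any initial segment, which is $\tau([m])\ge 0$ for all $m$, equivalently $\tau(m)\ge -1$. That is precisely the lower bound of Lemma~\ref{evil:1}. Hence Lemma~\ref{lem:1} (equivalently Theorem~\ref{evil:3}) applies, and for $n\ge 1$ the coordinate $a_n$ is the $n$th odious positive integer while $b_n$ is the $n$th evil positive integer, the terminal pair $(a_0,b_0)=(0,0)$ being separate. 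The whole theorem therefore reduces to explicit formulas for the $n$th odious and the $n$th evil positive integer.

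Next I would obtain these formulas by the ``block'' pairing of the integers into the pairs $\{2k,2k+1\}$, $k\ge 0$. For a nonnegative integer $m$ let $\sigma(m)\in\{0,1\}$ denote the parity of the number of ones in the binary expansion of $m$, so that $m$ is evil exactly when $\sigma(m)=0$ and odious exactly when $\sigma(m)=1$ (the Thue--Morse bit). Since $2k$ and $2k+1$ differ only in their last bit, each block contains exactly one evil and one odious integer; because $\sigma(2k)=\sigma(k)$, the evil member of block $k$ is $2k+\sigma(k)$ and the odious member is $2k+1-\sigma(k)$. Taking into account that $0$ is evil (so the positive evil numbers come from blocks $k\ge 1$, whereas the positive odious numbers already start in block $0$), a reindexing yields $b_n=2n+\sigma(n)$ and $a_n=2(n-1)+1-\sigma(n-1)=2n-1-\sigma(n-1)$. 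The formula for $b_n$ is exactly the one asserted in the statement, since $\sigma(n)=0$ for $n$ evil and $\sigma(n)=1$ for $n$ odious.

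Subtracting gives the compact identity $b_n-a_n=1+\sigma(n)+\sigma(n-1)$, so the gap is $1$, $2$, or $3$ according to whether $0$, $1$, or $2$ of the two indices $n-1,n$ are odious. The heart of the proof is to match this count to the vile/dopey classification of $n$. For $n\ge1$ with exactly $z$ trailing zeros we write $n=\cdots 1\,0^{z}$ and $n-1=\cdots 0\,1^{z}$, so the number of ones changes by $z-1$ between $n$ and $n-1$ and thus $\sigma(n)-\sigma(n-1)\equiv 1+z \pmod 2$. Hence $\sigma(n)\neq\sigma(n-1)$ precisely when $z$ is even, that is, precisely when $n$ is vile. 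Consequently: if $n$ is vile exactly one of $n-1,n$ is odious and the gap is $2$; if $n$ is dopey then $\sigma(n)=\sigma(n-1)$, and the gap is $1$ when $n$ (hence also $n-1$) is evil and $3$ when $n$ (hence also $n-1$) is odious. These three alternatives are exhaustive and disjoint and reproduce exactly the three cases $a_n=b_n-1$ (evil and dopey), $a_n=b_n-2$ (vile), and $a_n=b_n-3$ (odious and dopey).

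I expect the main obstacle to be the final translation step: linking the Thue--Morse parities $\sigma(n),\sigma(n-1)$ to the trailing-zero (vile/dopey) invariant of $n$, together with keeping the two enumerations correctly aligned, where the asymmetry caused by $0$ being evil shifts the odious index relative to the evil one. As an alternative to the pairing argument one can reach the same two closed forms through Lemma~\ref{evil:2}: evaluating $\chi$ at $e_n$ forces $e_n\in\{2n,2n+1\}$ and evaluating $\chi$ at $o_n$ forces $o_n\in\{2n-2,2n-1\}$, after which exactly the same parity computation of $\sigma(n)-\sigma(n-1)$ selects the correct value in each pair and finishes the proof.
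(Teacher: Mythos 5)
Your proof is correct, and it takes a genuinely different route from the paper's. The two arguments share the same foundation: green-domination of the evil set follows from the lower bound in Lemma~\ref{evil:1} (your reduction $\tau([m])\ge 0 \iff \tau(m)\ge -1$ correctly accounts for $0$ being red), so Lemma~\ref{lem:1} (packaged in the paper as Theorem~\ref{evil:3}) identifies $a_n$ with the $n$th odious positive integer and $b_n$ with the $n$th evil one. From there the paper proceeds by induction on $n$ in four cases according to whether $n$ is evil/odious and vile/dopey; each case first pins down the type of $n-1$ by an exhaustive binary-expansion analysis, then uses the chromatic numbers of Lemma~\ref{evil:2} to color the two or three integers following $b_{n-1}$ and reads off $b_n$ and $a_n$ from the resulting tables. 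You instead derive closed forms directly: the block pairing $\{2k,2k+1\}$ together with $\sigma(2k)=\sigma(k)$ gives $b_n=2n+\sigma(n)$ and $a_n=2n-1-\sigma(n-1)$ (your reindexing for the fact that $0$ is evil, which shifts the odious enumeration by one block, is exactly right), whence $b_n-a_n=1+\sigma(n)+\sigma(n-1)$; the trailing-zeros computation $\sigma(n)-\sigma(n-1)\equiv 1+z \pmod 2$ then converts this gap into precisely the vile/dopey trichotomy of the statement. What your route buys is brevity and transparency: the four-case induction and repeated expansion tables are replaced by two short identities, and the appearance of vile/dopey is explained conceptually (it records whether consecutive Thue--Morse bits agree) rather than verified case by case. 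What the paper's route buys is that it stays entirely inside its own chromatic-number machinery, re-deriving along the way the facts about consecutive evil/odious integers that you obtain from the trailing-zero argument; your secondary route via Lemma~\ref{evil:2} (forcing $e_n\in\{2n,2n+1\}$ and $o_n\in\{2n-2,2n-1\}$) shows the two approaches can be blended, and is also sound.
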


\begin{proof} The proof will proceed by induction on $n$.  

\bigskip

\noindent{\sc Case 1:} $n$ is \textbf{evil} and \textbf{dopey}.  We claim that $n-1$ must be evil and vile.  If not, then $n-1$ is evil and dopey, odious and dopey, or odious and vile.  In either of the first two cases, $n-1$ has binary expansion $$\underbrace{1** \cdots *1}_{k \ 1\text{'s}} \underbrace{00\cdots 0}_{2m+1\ 0\text{'s}},$$ where $k$ is even if $n-1$ is evil and $k$ is odd if $n-1$ is odious (where $m \geq 0$ and a $\ast$ denotes a 0 or a 1).  But this implies that $n = (n-1) + 1$ is either evil and vile or odious and vile, a contradiction.

Next suppose that $n-1$ is odious and vile.  Then $n-1$ has binary expansion 

$$\begin{array}{lllll} (i)  &\underbrace{1** \cdots *}_{2k+1 \ 1\text{'s}}0\underbrace{11\cdots1}_{2m \ 1\text{'s}}, &{} &(ii) &\underbrace{1** \cdots *}_{2k \ 1\text{'s}}0\underbrace{11\cdots1}_{2m+1 \ 1\text{'s}},\\
{}&{}&{}&{}&{}\\
(iii) &\underbrace{11\cdots1}_{2k+1 \ 1\text{'s}}, &\text{or}  &{(iv)}  & \underbrace{1** \cdots *1}_{2k+1 \ 1\text{'s}}\underbrace{00 \cdots 0}_{2m>0 \ 0\text{'s}}.
\end{array}$$

Since $n$ is dopey, cases $(i)$ and $(iv)$ are not possible, and since $n$ is evil, cases $(ii)$ and $(iii)$ are not possible.

Hence, $n-1$ is evil and vile.  Then $n-1$ has binary expansion 
$$\begin{array}{rrrrrrr} (i)  &\underbrace{1** \cdots *}_{2k+1 \ 1\text{'s}}0\underbrace{11\cdots1}_{2m+1 \ 1\text{'s}}, &(ii) &\underbrace{11\cdots1}_{2k \ 1\text{'s}}, &\text{or}  &{(iii)}  & \underbrace{1** \cdots *1}_{2k \ 1\text{'s}}\underbrace{00 \cdots 0}_{2m>0 \ 0\text{'s}}.
\end{array}$$   Since $n$ is dopey, cases $(ii)$ and $(iii)$ are not possible.   In case $(i)$, $$b_{n-1} = 2(n-1) = (\underbrace{1** \cdots *}_{2k+1 \ 1\text{'s}}0\underbrace{11\cdots1}_{2m+1 \ 1\text{'s}}0)_2,$$ by induction.  Adding the next two consecutive terms after $b_{n-1}$ and using Lemma~\ref{evil:2} we observe that  $$\begin{array}{l|l|crlll}
{} &\chi &{} \\
\hline
b_{n-1} &0 &b_{n-1} &=&2(n-1)\\
a_{i} &1 &{} \\
b_n &0  &b_n &= &b_{n-1} + 2 &= &2n
\end{array}$$  Based on the chromatic numbers shown above, we must have $a_n = a_{i} = b_n-1$. Therefore, if $n$ is evil and dopey, then $b_n = 2n$ and $a_n = b_n - 1$.

\bigskip
\bigskip

\noindent{\sc Case 2:} $n$ is \textbf{odious} and \textbf{dopey}.  We will show that $n-1$ must be odious and vile.  If $n-1$ is evil and dopey, then the binary expansion of $n-1$ looks like $$ \underbrace{1** \cdots *1}_{2k \ 1\text{'s}}\underbrace{00 \cdots 0}_{2m+1 \ 0\text{'s}}.$$  But, if this was true, then $n$ would be vile.  Thus $n-1$ is not evil and dopey.  Next we consider what happens if $n-1$ was evil and vile.  Then $n-1$ would have binary expansion $$\begin{array}{rrrrrrr} (i)  &\underbrace{1** \cdots *}_{2k+1 \ 1\text{'s}}0\underbrace{11\cdots1}_{2m+1 \ 1\text{'s}}, &(ii) &\underbrace{11\cdots1}_{2k \ 1\text{'s}}, &\text{or}  &{(iii)}  & \underbrace{1** \cdots *1}_{2k \ 1\text{'s}}\underbrace{00 \cdots 0}_{2m>0 \ 0\text{'s}}.
\end{array}$$  Case $(i)$ is not possible since $n$ is odious.  Further, cases $(ii)$ and $(iii)$ are not possible because $n$ is dopey.  Now, if $n-1$ is odious and dopey, then $n-1$ would have binary expansion $$ \underbrace{1** \cdots *1}_{2k+1 \ 1\text{'s}}\underbrace{00 \cdots 0}_{2m+1 \ 0\text{'s}}.$$  If this was so, then $n$ would be evil and vile, a contradiction.  By process of elimination, $n-1$ must be odious and vile.  Then $n-1$ has binary expansion 

$$\begin{array}{lllll} 
(i)  &\underbrace{1** \cdots *}_{2k+1 \ 1\text{'s}}0\underbrace{11\cdots1}_{2m \ 1\text{'s}}, &{} &(ii) &\underbrace{1** \cdots *}_{2k \ 1\text{'s}}0\underbrace{11\cdots1}_{2m+1 \ 1\text{'s}},\\
{}&{}&{}&{}&{}\\
(iii) &\underbrace{11\cdots1}_{2k+1 \ 1\text{'s}}, &\text{or}  &{(iv)}  & \underbrace{1** \cdots *1}_{2k+1 \ 1\text{'s}}\underbrace{00 \cdots 0}_{2m>0 \ 0\text{'s}}.
\end{array}$$

Since $n$ is dopey, cases $(i)$ and $(iv)$ are not possible.  In case $(ii)$, $$b_{n-1} = 2(n-1)+1 = (\underbrace{1** \cdots *}_{2k \ 1\text{'s}}0\underbrace{11\cdots1}_{2m+1 \ 1\text{'s}}1)_2,$$ and in case $(iii)$, $$b_{n-1} = 2(n-1)+1 = (\underbrace{11\cdots1}_{2k+1 \ 1\text{'s}}1)_2,$$  both by induction.  Adding the term before $b_{n-1}$ and the two after it,  we see by Lemma~\ref{evil:2} that  $$\begin{array}{l|l|crlll}
{} &\chi &{} \\
\hline
a_{i-1} &2 &{}\\
b_{n-1} &1 &b_{n-1} &=&2(n-1)+1\\
a_{i} &2 &{} \\
b_n &1  &b_n &= &b_{n-1} + 2 &= &2n+1
\end{array}$$  Based on the chromatic numbers shown above, we must have $a_n = a_{i-1} = b_n-3$. Therefore, if $n$ is odious and dopey, then $b_n = 2n+1$ and $a_n = b_n - 3$.

\bigskip
\bigskip

\noindent{\sc Case 3:} $n$ is \textbf{evil} and \textbf{vile}.  We will show that $n-1$ is either odious and dopey or odious and vile.  To this end, if $n-1$ was evil and dopey, then its binary expansion would look like $$\underbrace{1** \cdots *1}_{2k \ 1\text{'s}}\underbrace{00\cdots0}_{2m+1 \ 0\text{'s}}.$$  Since $n$ is evil, this is not possible.  If $n-1$ was evil and vile then its binary expansion would look like 

$$\begin{array}{lllll} 
(i)  &\underbrace{1** \cdots *}_{2k \ 1\text{'s}}0\underbrace{11\cdots1}_{2m \ 1\text{'s}}, &{} &(ii) &\underbrace{1** \cdots *}_{2k+1 \ 1\text{'s}}0\underbrace{11\cdots1}_{2m+1 \ 1\text{'s}},\\
{}&{}&{}&{}&{}\\
(iii) &\underbrace{11\cdots1}_{2k \ 1\text{'s}}, &\text{or}  &{(iv)}  & \underbrace{1** \cdots *1}_{2k \ 1\text{'s}}\underbrace{00 \cdots 0}_{2m>0 \ 0\text{'s}}.
\end{array}$$

Cases $(i), (iii)$, and $(iv)$ are not possible since $n$ is evil and case $(ii)$ is not possible as $n$ is vile.  Thus, $n-1$ is either odious and dopey or odious and vile.  If $n-1$ is odious and dopey, then its binary expansion looks like $$\underbrace{1** \cdots *1}_{2k+1 \ 1\text{'s}}\underbrace{00\cdots0}_{2m+1 \ 0\text{'s}}.$$  Then, by induction, $$b_{n-1} = 2(n-1) + 1 = (\underbrace{1** \cdots *1}_{2k+1 \ 1\text{'s}}\underbrace{00\cdots0}_{2m+1 \ 0\text{'s}}1)_2.$$  Adding the term before $b_{n-1}$ and the one after it,  we see by Lemma~\ref{evil:2} that  $$\begin{array}{l|l|crlll}
{} &\chi &{} \\
\hline
a_i &2 &{}\\
b_{n-1} &1 &b_{n-1} &=&2(n-1)+1\\
b_n &0  &b_n &= &b_{n-1} + 1 &= &2n
\end{array}$$  Based on the chromatic numbers shown above, we must have $a_n = a_i = b_n-2$.    Now if $n-1$ is odious and vile, then its  binary expansion looks like 

$$\begin{array}{lllll} 
(i)  &\underbrace{1** \cdots *}_{2k+1 \ 1\text{'s}}0\underbrace{11\cdots1}_{2m \ 1\text{'s}},&{}  &(ii) &\underbrace{1** \cdots *}_{2k \ 1\text{'s}}0\underbrace{11\cdots1}_{2m+1 \ 1\text{'s}},\\
{}&{}&{}&{}&{}\\
(iii) &\underbrace{11\cdots1}_{2k+1 \ 1\text{'s}}, &\text{or}  &{(iv)}  & \underbrace{1** \cdots *1}_{2k+1 \ 1\text{'s}}\underbrace{00 \cdots 0}_{2m >0 \ 0\text{'s}}.
\end{array}$$

Because $n$ is vile, cases $(ii)$ and $(iii)$ are not possible.  By induction $$b_{n-1} = 2(n-1)+1 = (\underbrace{1** \cdots *}_{2k+1 \ 1\text{'s}}0\underbrace{11\cdots1}_{2m \ 1\text{'s}}1)_2$$ in case $(i)$ and $$b_{n-1} = 2(n-1)+1 = (\underbrace{1** \cdots *1}_{2k+1 \ 1\text{'s}}\underbrace{00 \cdots 0}_{2m >0 \ 0\text{'s}}1)_2$$ in case $(iv)$.  In either case we again have $$\begin{array}{l|l|crlll}
{} &\chi &{} \\
\hline
a_i &2 &{}\\
b_{n-1} &1 &b_{n-1} &=&2(n-1)+1\\
b_n &0  &b_n &= &b_{n-1} + 1 &= &2n
\end{array}$$  Thus, if $n$ is evil and vile, then $b_n = 2n$ and $a_n = b_n - 2$.

\bigskip
\bigskip

\noindent{\sc Case 4:} $n$ is \textbf{odious} and \textbf{vile}.  We will show that $n-1$ is either evil and dopey or evil and vile.  If $n-1$ was odious and dopey, then its binary expansion would look like $$\underbrace{1** \cdots *1}_{2k+1 \ 1\text{'s}}\underbrace{00\cdots0}_{2m+1 \ 0\text{'s}}.$$  Since $n$ is odious, this is not possible.  If $n-1$ was odious and vile, then it binary expansion would look like $$\begin{array}{lllll} (i)  &\underbrace{1** \cdots *}_{2k+1 \ 1\text{'s}}0\underbrace{11\cdots1}_{2m \ 1\text{'s}}, &{} &(ii) &\underbrace{1** \cdots *}_{2k \ 1\text{'s}}0\underbrace{11\cdots1}_{2m+1 \ 1\text{'s}},\\

{}&{}&{}&{}&{}\\

(iii) &\underbrace{11\cdots1}_{2k+1 \ 1\text{'s}}, &\text{or} &{(iv)}  &\underbrace{1** \cdots *1}_{2k+1 \ 1\text{'s}}\underbrace{00 \cdots 0}_{2m >0 \ 0\text{'s}}.\end{array}$$

Because $n$ is odious, cases $(i)$ and $(iv)$ are not possible and since $n$ is vile, cases $(ii)$ and $(iii)$ are not possible.  Thus, $n-1$ is either evil and dopey or evil and vile.  If $n-1$ is evil and dopey, then it has binary expansion $$\underbrace{1**\cdots *1}_{2k \ 1\text{'s}}\underbrace{00\cdots0}_{2m+1 \ 0\text{'s}}.$$  Thus, by induction, $$b_{n-1} = 2(n-1) =  (\underbrace{1**\cdots *1}_{2k \ 1\text{'s}}\underbrace{00\cdots0}_{2m+1 \ 0\text{'s}}0)_2.$$ Adding the three consecutive terms after $b_{n-1}$ and using Lemma~\ref{evil:2} we have
$$\begin{array}{l|l|crlll}
{} &\chi &{} \\
\hline
b_{n-1} &0 &b_{n-1} &=&2(n-1)\\
a_{i-1} &1 \\
a_{i} &2 &{} \\
b_n &1  &b_n &= &b_{n-1} + 3 &= &2n + 1
\end{array}$$  Based on the chromatic numbers shown, we must have $a_n = a_{i-1} = b_n-2$.  If $n-1$ is evil and vile, then it has binary expansion 

$$\begin{array}{lllll} 
(i)  &\underbrace{1** \cdots *}_{2k \ 1\text{'s}}0\underbrace{11\cdots1}_{2m \ 1\text{'s}},&{} &(ii) &\underbrace{1** \cdots *}_{2k+1 \ 1\text{'s}}0\underbrace{11\cdots1}_{2m+1 \ 1\text{'s}},\\
{}&{}&{}&{}&{}\\
(iii) &\underbrace{11\cdots1}_{2k \ 1\text{'s}}, &\text{or}  &{(iv)}  & \underbrace{1** \cdots *1}_{2k \ 1\text{'s}}\underbrace{00 \cdots 0}_{2m>0 \ 0\text{'s}}.
\end{array}$$

Since $n$ is odious, $(ii)$ is not possible.  By induction $$b_{n-1} = 2(n-1) = (\underbrace{1** \cdots *}_{2k \ 1\text{'s}}0\underbrace{11\cdots1}_{2m \ 1\text{'s}}0)_2$$ in case $(i)$, $$b_{n-1} = 2(n-1) = (\underbrace{11\cdots1}_{2k \ 1\text{'s}}0)_2$$ in case $(iii)$, and 
$$b_{n-1} = 2(n-1) = (\underbrace{1** \cdots *1}_{2k \ 1\text{'s}}\underbrace{00 \cdots 0}_{2m >0 \ 0\text{'s}}0)_2$$ in case $(iv)$.  No matter what the case, we again have $$\begin{array}{l|l|crlll}
{} &\chi &{} \\
\hline
b_{n-1} &0 &b_{n-1} &=&2(n-1)\\
a_{i-1} &1 \\
a_{i} &2 &{} \\
b_n &1  &b_n &= &b_{n-1} + 3 &= &2n + 1
\end{array}$$   Thus $a_n = a_{i-1} = b_n-2$.  Hence, if $n$ is odious and vile, then $b_n = 2n+1$ and $a_n = b_n-2$.    \end{proof}

\begin{example}
Find the $17509^{17509}$th $\mathcal{P}$-position of evil-Chromatic Nim ($17509$ is the $2015$th prime).  
\end{example}

\noindent With the help of the computer algebra system \emph{Mathematica} we know that $q=17509^{17509}$ is evil and vile.  Hence, Theorem~\ref{evil:5} tells us that $$(a_q,b_q) = (2q-2, 2q).$$ \hfill$\Diamond$

\end{document}